\newcommand{\thickhline}{%
	\noalign {\ifnum 0=`}\fi \hrule height 1pt
	\futurelet \reserved@a \@xhline
}
\definecolor{darkblue}{rgb}{0.0,0,0.7} 
\definecolor{darkred}{rgb}{0.7,0,0} 
\newcommand{\darkred}{\color{darkred}} 
\newcommand{\defn}[1]{\emph{\darkred #1}}
\newtheorem{theorem}{Theorem}[section]
\newtheorem{prop}[theorem]{Proposition}
\newtheorem{lemma}[theorem]{Lemma}
\newtheorem{cor}[theorem]{Corollary}
\theoremstyle{definition}
\newtheorem{definition}[theorem]{Definition}
\newtheorem{rmq}[theorem]{Remark}
\numberwithin{equation}{section}
\title{Absolute order and involutions}
\author{Thomas Gobet}
\address{Université Clermont Auvergne, LMBP, UMR 6620 (CNRS), Campus des Cézeaux, 3 place Vasarely, TSA 60026, CS 60026, 63178 Aubière cedex, France}
\begin{document}
	\maketitle
	
	\begin{abstract}
		We study the restriction of the absolute order on a Coxeter group $W$ to an interval $[1,w]_T$, where $w\in W$ is an involution. We characterize and classify those involutions $w$ for which $[1,w]_T$ is a lattice, using the notion of \textit{involutive parabolic subgroups}.  
	\end{abstract}
	
	\tableofcontents

	\section{Introduction}
	
Let $(W,S)$ be a Coxeter system with set $T = \bigcup_{w\in W} w S w^{-1}$ of reflections. Let $\ell_T : W \longrightarrow \mathbb{Z}_{\geq 0}$ denote the length function on $W$ with respect to the set $T$ of generators, called \defn{reflection length}. The \defn{absolute order} on $W$, denoted $\leq_T$, is defined by $$ u \leq_T v \ \Leftrightarrow \ \ell_T(u) + \ell_T(u^{-1} v) = \ell_T(v).$$	
The reflection length was first considered by Carter~\cite{Carter} to classify conjugacy classes in the Weyl groups. Both the reflection length and absolute order play a crucial role in the construction of dual Artin groups attached to finite Coxeter groups. In this context, denoting $[u,v]_T :=\{ w\in W \ \vert \ u \leq_T w \leq_T v\}$, such groups are built as so-called \textit{interval groups} from the intervals $[1,c]_T$, where $c$ is a Coxeter element in $W$, that is, a product of the elements of $S$ in some order. An important feature in that, when $W$ is finite, the poset $[1,c]_T$ is a lattice, isomorphic if $W$ is the symmetric group to the lattice of noncrossing partitions~\cite{Biane}. The absolute order therefore also allows one to generalize the lattice of noncrossing partitions to every finite Coxeter group, and is thus also actively studied in connection with combinatorial structures related to noncrossing partitions--see for instance~\cite{Arm, STW}. In the interval group setting, the lattice property ensures that the attached interval group is a so-called \textit{Garside group}. Such a group is torsion-free, admits a solution to the word and conjugacy problems, has a finite $K(\pi, 1)$, a nontrivial computable center, etc. In the case of a finite Coxeter group $W$, the interval group attached to $[1,c]_T$ turns out to be isomorphic to the Artin group of the same type. This motivates the study of the poset $[1,c]_T$, also in non-spherical type, as a tool to understand Artin groups of non-spherical type. For more on the topic we refer to~\cite{Bessis, Paolini}. For which $W$ and which choice of Coxeter element $c$ the poset $[1,c]_T$ is a lattice remains a difficult, open question in the field. A solution to this question is known only for finite, affine, universal, and rank three Coxeter systems. 

In fact, every element $w\in W$ such that $[1,w]_T$ is a lattice gives rise to an interval Garside group. It is thus natural to seek for elements $w$ satisfying this property. We summarize the known results on this question: 
\begin{itemize}
	\item If $W$ is finite~\cite{Bessis, BW, Reading}, universal~\cite{Bessis_free}, or of rank three~\cite{DPS, Gobet}, and $c$ is a (parabolic) Coxeter element, then $[1,c]_T$ is a lattice,
	\item If $W$ is of type $\widetilde{A}_n$ or $\widetilde{C}_n$~\cite{Dig1, Dig2}, then for suitable choices of Coxeter elements $[1,c]_T$ is a lattice, and for other choices it is not, 
	\item If $W$ is of affine type and not covered by the two previous points, then for any choice of Coxeter element, $[1,c]_T$ is not a lattice~\cite{McCammond}, 
	\item In type $A_n$, $[1,w]_T$ is a lattice for every $w\in W$ (as every element is a parabolic Coxeter element in this case),
	\item In types $B_n$ and $D_n$, there is a full classification of elements $w$ such that $[1,w]_T$ is a lattice~\cite{Kalli} (it is never the whole group), 
	\item If $W$ is arbitrary and $w\in W$ is such that $\ell_T(w) \leq 3$, then $[1,w]_T$ is a lattice~\cite{Gobet}.
\end{itemize}  
	
The aim of this paper is to classify the involutions $w$ such that $[1,w]_T$ is a lattice. We develop uniform tools to answer the above question. More precisely, given an involution $w$ in a Coxeter group $W$, denote by $P(w)$ its parabolic closure, which turns out to be finite. Since $P(w)$ is a reflection subgroup of $W$, it is again a Coxeter group, with set of generators $T_w$. The poset $[1,w]_T$ fully lies inside $P(w)$, and coincides with $[1,w]_{T_w}$. We can thus restrict our study to finite Coxeter groups which are parabolic closures of involutions. We then show in Proposition~\ref{prop_poset_isom} that $[1,w]_T$ coincides with the set of involutions in $P(w)$, and in Theorem~\ref{thm_lattice} that $[1,w]_T$ is a lattice if and only if the set of parabolic closures of involutions in $P(w)$ is stable by intersection. Finally, using these results, we classify in Section~\ref{sec_class} the involutions $w$ such that $[1,w]_T$ is a lattice. 

In a forthcoming joint paper with E.~Chavli, we will identify and study the attached interval Garside groups. 

\medskip 

\textbf{Acknowledgements.} I thank Eirini Chavli for useful discussions.

	\section{Preliminaries}
	
	Let $(W,S)$ be a Coxeter system. Let $V$ be its geometric representation and $T:=\bigcup_{w\in W} w S w^{-1}$ its set of reflections (for basics on Coxeter groups we refer the reader to~\cite{Bou, Humphreys}). We denote by $\Phi = \Phi^+ \sqcup \Phi^-$ the corresponding root system and by $t \leftrightarrow \alpha_t$ the $1$ to $1$ correspondence between reflections and positive roots. Recall that there is a function $N: W \longrightarrow \mathcal{P}(T)$ such that $N(s) = \{s\}$ for all $s\in S$ and $N(xy) = N(x) \ \Delta \ x N(y) x^{-1}$, where $\Delta$ denotes symmetric difference. One has $|N(w)|=\ell_S(w)$ for all $w\in W$, where $\ell_S$ stands for the length function on $W$ with respect to the generating set $S$. The elements of $N(w)$ are called the \defn{(left) inversions} of $w$ and may be characterized geometrically as those $t\in T$ such that $w^{-1} (\alpha_t) \in \Phi^{-}$, equivalently such that $\ell_S(tw) < \ell_S(w)$. If $W$ is finite we say that $(W,S)$ is a finite Coxeter system, and denote by $w_0$ its longest element, which is involutive and satisfies $N(w_0)=T$.
	
	The \defn{reflection length} $\ell_T(w)$ of an element $w\in W$ is the smallest number $k$ for which there exist reflections $t_1, t_2, \dots, t_k\in T$ such that $w=t_1 t_2 \cdots t_k$. If $t_1, t_2, \dots, t_k\in T$ and $w= t_1 t_2 \cdots t_k$ with $k=\ell_T(w)$, we say that the word $t_1 t_2 \cdots t_k$ is a \defn{$T$-reduced expression} (versus a $\defn{$S$-reduced expression}$ when $t_i\in S$ and $k=\ell_S(w)$) of $w\in W$. The \defn{absolute order} $\leq_T$ on $W$ is then defined as $ u \leq_T v$ if and only if $\ell_T(u) + \ell_T(u^{-1} v)= \ell_T(v)$, that is, there is a $T$-reduced expression of $v$ having a $T$-reduced expression of $u$ as a prefix (equivalently as a suffix, since $T$ is stable by conjugation). It is natural to seek for geometric interpretations of $\ell_T(w)$ and algorithms to determine it. When $W$ is finite we have the following:
	
	\begin{theorem}[Carter's Lemma, {\cite[Lemmas 1 - 3]{Carter}}]\label{thm_carter}
		Let $(W,S)$ be a finite Coxeter group. For $w\in W$, let $V^w=\{v\in V \mid w(v)=v\}$, which we rather write $H_t$ if $w=t\in T$. Then 
		\begin{enumerate}
			\item $\ell_T(w)= \mathrm{dim}(V) - \mathrm{dim}(V^w)$, for all $w\in W$. 
			\item For all $t\in T$, $w\in W$, we have $t \leq_T w \Leftrightarrow V^w \subseteq H_t.$
			\item Given $w\in W$ and $t_1, t_2, \dots, t_k\in T$ such that $w= t_1 t_2 \cdots t_k$, we have $\ell_T(w)=k$ if and only if the roots $\{\alpha_{t_i}\}_{i=1}^k$ are linearly independent. 
		\end{enumerate}
	\end{theorem}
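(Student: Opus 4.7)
The plan is to prove (1) first and then deduce (2) and (3) from it. The common technical tool is the observation that for any factorization $w = t_1 \cdots t_k$ into reflections, each $t_i$ acts as the identity on $U^{\perp}$, where $U := \mathrm{span}(\alpha_{t_1}, \dots, \alpha_{t_k})$, so the moved space $\mathrm{Mov}(w) := (V^w)^{\perp}$ lies in $U$. In particular $\dim V - \dim V^w \leq \dim U \leq k$, giving one inequality in (1).

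For the reverse inequality, I would induct on $d := \dim V - \dim V^w$. If $d=0$ then $w$ fixes $V$ pointwise and hence equals the identity. Otherwise, by Steinberg's fixed-point theorem, the pointwise stabilizer $W_0$ of $V^w$ is a finite parabolic subgroup of $W$ generated precisely by the reflections $s \in T$ with $H_s \supseteq V^w$; the element $w$ lies in $W_0$ and acts fixed-point-freely on $M := (V^w)^{\perp}$. On $M$, $W_0$ acts as a finite reflection group with trivial fixed space, and the inductive hypothesis applied to $w|_M$ inside this group produces a factorization $w|_M = \bar t_1 \cdots \bar t_d$ with $d$ reflections. This lifts to a factorization of $w$ in $W$ of the same length, since the reflections of the parabolic $W_0$ are precisely those reflections of $W$ lying in $W_0$, establishing (1).

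Part (3) then falls out cleanly: for any $w = t_1 \cdots t_k$ one has $\ell_T(w) = \dim V - \dim V^w \leq \dim U \leq k$, so $k = \ell_T(w)$ if and only if $\dim U = k$, i.e., $\alpha_{t_1}, \dots, \alpha_{t_k}$ are linearly independent. For the forward direction of (2), writing $w = t w'$ with $\ell_T(w) = 1 + \ell_T(w')$ and using (1), one checks that $\mathrm{Mov}(w) = \mathbb{R}\alpha_t + \mathrm{Mov}(w')$, which forces $V^w \subseteq H_t$. The converse direction uses a parity trick: since any reflection factorization $w = s_1 \cdots s_k$ gives $\det(w) = (-1)^k$, the parity of $\ell_T(w)$ agrees with that of $\det(w)$, and hence $\ell_T(tw)$ and $\ell_T(w)$ always differ in parity; combined with the trivial bound $|\ell_T(tw) - \ell_T(w)| \leq 1$ this forces $\ell_T(tw) = \ell_T(w) \pm 1$. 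Now if $V^w \subseteq H_t$, then every $v \in V^w$ satisfies $tw(v) = t(v) = v$, so $V^w \subseteq V^{tw}$, giving $\ell_T(tw) \leq \ell_T(w)$ and hence $\ell_T(tw) = \ell_T(w) - 1$, i.e., $t \leq_T w$.

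The main obstacle is the inductive step in (1): one must carefully justify that $W_0$ acts on $M$ as a reflection group of the appropriate rank, appealing to Steinberg's theorem together with the standard fact that a reflection of $W$ belongs to $W_0$ iff its hyperplane contains $V^w$, and verify that the inductive factorization of $w|_M$ lifts faithfully back to a reflection factorization of $w$ inside $W$ itself.
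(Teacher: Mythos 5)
The paper offers no proof of this statement: it is imported verbatim from Carter's article, so there is nothing to compare your argument against except its own internal logic. Judged on that basis, your outline contains two genuine gaps, both in the parts you identify as routine.

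The induction in part (1) is circular. You induct on $d=\dim V-\dim V^w$, but the element to which you apply the inductive hypothesis, namely $w|_M$ inside the reflection group $(W_0,M)$ with $M=(V^w)^\perp$, satisfies $\dim M-\dim M^{w|_M}=d-0=d$: the induction parameter does not decrease. The failure is total when $V^w=0$ (take $w=w_0$ acting as $-\mathrm{Id}_V$ in type $B_n$): there $W_0=W$ and $M=V$, so the ``reduction'' returns the original problem verbatim. The standard repair peels off one reflection at a time: since $w\neq 1$ lies in the pointwise stabilizer $W_0$ of $V^w$, which by Steinberg's theorem is generated by the reflections $t$ with $V^w\subseteq H_t$, at least one such $t$ exists; one then proves (this is the real content, cf.\ Brady--Watt) that $\dim V^{tw}=\dim V^w+1$ for such a $t$, and applies the inductive hypothesis to $tw$, whose parameter is $d-1$.

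Second, in part (3) the implication ``$\alpha_{t_1},\dots,\alpha_{t_k}$ linearly independent $\Rightarrow\ \ell_T(w)=k$'' does not follow from your chain $\ell_T(w)=\dim V-\dim V^w\leq\dim U\leq k$: when $\dim U=k$ that chain only recovers $\ell_T(w)\leq k$, which is trivially true. What is needed is that linear independence forces $V^w=U^\perp$ exactly, whereas a priori one only has $V^w\supseteq U^\perp$. This is Carter's actual Lemma~3 and requires an argument, e.g.\ writing $w(v)=v-\sum_i d_i\alpha_{t_i}$ for $v\in V^w$, using independence to force all $d_i=0$, and inducting on $k$ after concluding $t_k(v)=v$. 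By contrast, part (2) is correct modulo (1): the dimension count $\mathrm{Mov}(w)=\mathbb{R}\alpha_t+\mathrm{Mov}(tw)$ for the forward direction and the determinant-parity argument for the converse both go through.
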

	
	Let $\mathrm{Mov}(w):= \mathrm{Im}(w - \mathrm{Id}_V)$. It is called the \defn{moved space} of $w$ and is stable by $w$. We have $\mathrm{Mov}(w)^\perp = V^w$ (see~\cite[Proposition 1]{BW_o}). In particular $\ell_T(w)=\dim(\mathrm{Mov}(w))$. 
	
	Theorem~\ref{thm_carter} does not generalize to arbitrary $W$, where typically the reflection length can be unbounded in general~\cite{Dus}. In affine types one can derive a formula analogous to the one in point $1$ (see~\cite{BLMC}). 
	
	For arbitrary Coxeter systems, we have the following: 
	
	\begin{theorem}[Dyer, {\cite[Theorem 1.1]{Dyer_refn}}]\label{thm_dyer_ref_length} 
		Let $(W,S)$ be an arbitrary Coxeter system. Let $w\in W$. Let $s_1 s_2 \cdots s_k$ be any $S$-reduced expression of $w$. Then $\ell_T(w)$ is equal to the minimal number of letters to delete in the word $s_1 s_2 \cdots s_k$ to get a word representing the identity.  
	\end{theorem}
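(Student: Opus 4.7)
The plan is to establish the two inequalities separately; denote by $m$ the minimum number of deletions from $s_1 s_2 \cdots s_k$ producing a word representing the identity. For the upper bound $\ell_T(w) \le m$, suppose positions $i_1 < i_2 < \cdots < i_m$ are deleted and the resulting word equals $e$. I would iteratively move each $s_{i_j}$ leftward past its preceding letters in the word, replacing $s_{i_j}$ at each step by its conjugate by the current prefix (which remains a reflection by conjugation-stability of $T$). After all $m$ moves, $w$ is written as $r_1 r_2 \cdots r_m \cdot u$ where each $r_j \in T$ and $u$ is the product of the kept letters in order; by hypothesis $u = e$, so $w$ is a product of $m$ reflections, hence $\ell_T(w) \le m$.

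For the lower bound $m \le \ell_T(w)$ I would proceed by induction on $k = \ell_S(w)$; the base case $k = 0$ is trivial. Write $w = w' s_k$ with $w' = s_1 \cdots s_{k-1}$; since $s_1 \cdots s_{k-1}$ is $S$-reduced, the inductive hypothesis provides $\ell_T(w')$ deletions from it killing $w'$. The sign character of $(W,S)$, which sends every reflection to $-1$, forces $\ell_T \equiv \ell_S \pmod 2$, and the elementary bound $|\ell_T(vt) - \ell_T(v)| \le 1$ for $t \in T$ then yields $\ell_T(w') \in \{\ell_T(w) - 1,\, \ell_T(w) + 1\}$. In the subcase $\ell_T(w) = \ell_T(w') + 1$, simply adjoining the deletion of $s_k$ to the inductive ones gives $\ell_T(w') + 1 = \ell_T(w)$ deletions killing $w$, as required.

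The hard subcase is $\ell_T(w) = \ell_T(w') - 1$, which, using invariance of $\ell_T$ under conjugation (note that $s_k w' = s_k w s_k$), is equivalent to $s_k \le_T w'$. Here the naive combination of induction with the deletion of $s_k$ overshoots the target by one. The plan is instead to exhibit a set $I \subseteq \{1, \ldots, k-1\}$ of size $\ell_T(w') - 1$ whose complement in $s_1 \cdots s_{k-1}$ has product equal to $s_k$; keeping $s_k$ at the end then gives $\ell_T(w') - 1 = \ell_T(w)$ deletions from $s_1 \cdots s_k$ producing a word representing $s_k \cdot s_k = e$. Constructing such an $I$ is the main obstacle: it amounts to matching a $T$-reduced factorization of $w'$ with last factor $s_k$ (which exists because $s_k \le_T w'$) against positions in the $S$-reduced expression $s_1 \cdots s_{k-1}$. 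The approach I would take is to use the identification of $N(w')$ with the set of conjugate reflections $\{s_1 \cdots s_{j-1} s_j s_{j-1} \cdots s_1 \mid 1 \le j \le k-1\}$, combined with Dyer's strong exchange property applied inside the reflection subgroup these conjugates generate, to realize the desired factorization as such a deletion pattern; establishing that such a matching always exists is precisely the nontrivial core of the theorem.
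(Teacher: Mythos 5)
First, note that the paper does not prove this statement at all: it is quoted verbatim from Dyer (\cite[Theorem~1.1]{Dyer_refn}) and used as a black box, so there is no in-paper argument to compare yours against; your proposal has to stand on its own.

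On its own terms, the proposal has a genuine gap. The upper bound $\ell_T(w)\le m$ (extracting each deleted letter to the left as a conjugated reflection) is correct, the parity argument giving $\ell_T(w')\in\{\ell_T(w)-1,\ell_T(w)+1\}$ is correct, and the easy subcase $\ell_T(w)=\ell_T(w')+1$ is handled. But the entire content of the theorem sits in the subcase $\ell_T(w)=\ell_T(w')-1$, i.e.\ $s_k\le_T w'$, and there you only state a target --- find $\ell_T(w')-1$ positions in $s_1\cdots s_{k-1}$ whose deletion leaves a word representing $s_k$ --- and then write that ``establishing that such a matching always exists is precisely the nontrivial core of the theorem.'' That is an accurate self-assessment: the inductive hypothesis you set up only produces deletion sets whose complement represents the \emph{identity}, and it says nothing about deletion sets whose complement represents a prescribed reflection $t\le_T w'$. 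So the induction as formulated cannot close this case; you would need either a strengthened inductive statement (controlling deletions down to arbitrary elements $v\le_T w$, not just to $1$) or a separate exchange-type lemma relating $T$-reduced factorizations of $w'$ ending in $s_k$ to subwords of an $S$-reduced expression. The gesture toward ``strong exchange inside the reflection subgroup generated by $N(w')$'' is a plausible direction --- it is close in spirit to what Dyer actually does via reflection cocycles and reflection subgroups --- but no argument is given, so the proposal is a proof outline with its central step missing rather than a proof.
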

	
Let $I\subseteq S$. Recall that the subgroup $(W_I:=\langle I \rangle, I)$ is itself a Coxeter group, called \defn{standard parabolic subgroup}. A \defn{parabolic subgroup} is a subgroup of the form $w W_I w^{-1}$ for some $w\in W$. Parabolic subgroups are examples of reflection subgroups, that is, subgroups generated by a subset of $T$, which themselves carry a canonical structure of Coxeter group~\cite{Dyer_subgroups, Deodhar}--recall that if $W'\subseteq W$ is a reflection subgroup of $W$, then its set $T'$ of reflections with respect to its canonical Coxeter group structure is given by $W'\cap T$. Parabolic subgroups can also be described as centralizers of subspaces of $V$.  

Parabolic subgroups are stable by intersection, hence given $w\in W$ one can consider its \defn{parabolic closure} $P(w)$, defined as the smallest parabolic subgroup containing $w$. 

\medskip

The following is folkloric (see for instance~\cite[Section 2.4]{Gobet_dual}), and will be useful later on. 

\begin{prop}\label{prop_abs}
Assume that $W$ is finite. Let $w, x\in W$, $t\in T$. Then 
\begin{enumerate}
	\item We have $P(w)= C_W(V^w):=\{w\in W \ \vert \ w(v)=v \ \forall v\in V^w\}$, and $\mathrm{rank}(P(w))=\ell_T(w)$,
	\item If $w \leq_T x$, then $V^x \subseteq V^w$, hence $P(w) \subseteq P(x)$,
	\item  We have $t \leq_T w \Leftrightarrow t \in P(w)$. 
\end{enumerate}
\end{prop}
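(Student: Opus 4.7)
The plan is to derive all three parts from Carter's Lemma (Theorem~\ref{thm_carter}) together with Steinberg's theorem, which asserts that for a finite Coxeter group $W$ acting on $V$ and any subspace $U\subseteq V$, the pointwise stabilizer $C_W(U)$ is a parabolic subgroup of $W$.

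I would tackle (1) first. Since $w$ acts as the identity on $V^w$, one has $w\in C_W(V^w)$, and by Steinberg's theorem $C_W(V^w)$ is parabolic. To see it is the \emph{smallest} parabolic subgroup containing $w$, suppose $P$ is any such subgroup. By the description of parabolic subgroups of finite Coxeter groups as pointwise stabilizers of subspaces, $P=C_W(U)$ for some $U\subseteq V$; then $w\in P$ forces $U\subseteq V^w$, hence $C_W(V^w)\subseteq C_W(U)=P$. This proves $P(w)=C_W(V^w)$. For the rank equality, note that $V^{P(w)}=V^w$: the inclusion $V^w\subseteq V^{P(w)}$ is immediate from the definition of $C_W(V^w)$, and the reverse follows because $w\in P(w)$ forces $V^{P(w)}\subseteq V^w$. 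Applying Theorem~\ref{thm_carter}(1) to the Coxeter group $P(w)$ acting on $V$ then yields $\mathrm{rank}(P(w))=\dim V-\dim V^{P(w)}=\dim V-\dim V^w=\ell_T(w)$.

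For (2), fix a $T$-reduced expression $x=t_1\cdots t_\ell$ admitting a $T$-reduced expression $t_1\cdots t_k$ of $w$ as a prefix, where $k=\ell_T(w)$ and $\ell=\ell_T(x)$. By Theorem~\ref{thm_carter}(3), the roots $\alpha_{t_1},\dots,\alpha_{t_\ell}$ are linearly independent. Since $\mathrm{Mov}(x)$ is contained in the span of $\alpha_{t_1},\dots,\alpha_{t_\ell}$ (each $t_i-\mathrm{Id}_V$ has image the line $\BZ\alpha_{t_i}\otimes\BZ$, so their product moves vectors only within this sum) and has dimension $\ell_T(x)=\ell$, equality holds. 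Truncating to the first $k$ indices gives $\mathrm{Mov}(w)\subseteq\mathrm{Mov}(x)$. Taking orthogonal complements with respect to the $W$-invariant bilinear form, and recalling $V^u=\mathrm{Mov}(u)^\perp$, we obtain $V^x\subseteq V^w$; the inclusion $P(w)\subseteq P(x)$ then follows from (1).

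Finally, (3) is a formal consequence of (1) and (2) combined with Carter's Lemma. If $t\leq_T w$, then (2) gives $V^w\subseteq V^t=H_t$, so $t$ fixes $V^w$ pointwise, i.e.\ $t\in C_W(V^w)=P(w)$. Conversely, if $t\in P(w)=C_W(V^w)$, then $V^w\subseteq H_t$, and Theorem~\ref{thm_carter}(2) yields $t\leq_T w$. The main obstacle in this plan is the appeal to Steinberg's fixed-subspace theorem in part (1); the remainder reduces to Carter's Lemma together with the elementary duality between moved spaces and their orthogonal fixed subspaces.
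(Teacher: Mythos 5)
The paper never actually proves this proposition: it is declared ``folkloric'' and delegated to~\cite{Gobet_dual}, so there is no in-text argument to compare yours against. Your reconstruction follows the standard route and is correct in outline. The appeal to Steinberg's theorem is consistent with the paper's own framework (the preliminaries assert that parabolic subgroups are exactly the pointwise stabilizers of subspaces of $V$), and your minimality argument for $P(w)=C_W(V^w)$ is exactly right. For~(2), the telescoping observation that $\mathrm{Mov}(x)\subseteq\sum_i\mathbb{R}\alpha_{t_i}$, combined with Carter's Lemma~(3) and the duality $V^u=\mathrm{Mov}(u)^\perp$ quoted in the paper, does give $V^x\subseteq V^w$; and~(3) then follows formally as you say. (The notation ``$\BZ\alpha_{t_i}\otimes\BZ$'' should of course read $\mathbb{R}\alpha_{t_i}$.)

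The one step that is not actually justified is the rank equality in~(1). Theorem~\ref{thm_carter}(1) computes the reflection length of an \emph{element}; applied to $w\in P(w)$ it gives $\ell_T(w)=\dim V-\dim V^{w}=\dim V-\dim V^{P(w)}$, but it does not say that this number equals $\mathrm{rank}(P(w))$ --- that identification is precisely the claim to be proved, so as written the step is circular. The missing ingredient is the standard fact that the rank of a parabolic subgroup equals the dimension of the span of its roots. You can supply it inside your own framework: by Carter's Lemma~(3), write $w=t_1\cdots t_k$ with $k=\ell_T(w)$ and $\alpha_{t_1},\dots,\alpha_{t_k}$ linearly independent; each $t_i$ satisfies $V^w\subseteq H_{t_i}$ by Carter's Lemma~(2), hence $t_i\in C_W(V^w)=P(w)$, while conversely every reflection $t\in P(w)$ fixes $V^w$ pointwise, so $\alpha_t\in(V^w)^{\perp}$, a space of dimension $k$. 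The roots of $P(w)$ therefore span exactly a $k$-dimensional subspace, and since a finite reflection group is conjugate to a standard parabolic whose rank is the dimension of its root span, $\mathrm{rank}(P(w))=k=\ell_T(w)$. With that patch the proof is complete.
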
	
Since $P(w)$ is a reflection subgroup of $W$, hence has set of reflections $P(w) \cap T$, we have that the set of elements of $T$ such that $t\leq_T w$ is precisely the set of reflections of $P(w)$, hence it is in particular a generating set. 

	\section{Involutive parabolic subgroups and lattices of involutions}
	
	Let $(W,S)$ be a Coxeter system. 
	
	\begin{definition}
		Let $P\subseteq W$ be a parabolic subgroup. We say that $P$ is \defn{involutive} if there is an element $x\in W$ such that $x^2=1$ and $P(x)=P$. 
	\end{definition}
	
	\begin{lemma}\label{p_finite}
	If $P$ is involutive, then $P$ is finite. 
	\end{lemma}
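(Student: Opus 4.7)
The plan is to reduce the finiteness of $P(x)$ to the classical fact that every involution in a Coxeter group lies inside some finite parabolic subgroup. Specifically, I would invoke Richardson's theorem on involutions in Coxeter groups: every involution $x \in W$ is conjugate in $W$ to the longest element $w_0(W_J)$ of some finite standard parabolic subgroup $W_J$, where $J \subseteq S$ is such that $W_J$ is finite. Writing $x = g w_0(W_J) g^{-1}$ for some $g \in W$, the element $x$ then belongs to the finite parabolic subgroup $g W_J g^{-1}$. Since $P(x)$ is by definition the smallest parabolic subgroup of $W$ containing $x$, we obtain $P(x) \subseteq g W_J g^{-1}$, whence $P(x)$ is finite.

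For the purposes of the subsequent sections one likely wants to refine the argument to identify $P(x)$ as exactly $g W_J g^{-1}$. Inside the finite Coxeter group $g W_J g^{-1}$, the element $x$ has reflection length $|J|$ by Theorem~\ref{thm_carter}(1), so Proposition~\ref{prop_abs}(1) forces the parabolic closure of $x$ computed inside $g W_J g^{-1}$ to have rank $|J|$, hence to coincide with $g W_J g^{-1}$ itself. That this parabolic closure agrees with $P(x)$ follows from the standard fact that the parabolic subgroups of $W$ contained in $g W_J g^{-1}$ coincide with the parabolic subgroups of $g W_J g^{-1}$ viewed as its own Coxeter group.

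The main obstacle is the reliance on Richardson's theorem, a nontrivial ingredient whose proof uses the geometry of the Tits cone in an essential way. A more self-contained route would instead analyze the action of $x$ on its moved space $\mathrm{Mov}(x)$, exploiting the decomposition $V = V^x \oplus \mathrm{Mov}(x)$ available for any involution, combined with positivity properties of the Tits form on $\mathrm{Mov}(x)$, in order to realize $P(x)$ directly as a finite reflection group acting on $\mathrm{Mov}(x)$; this is essentially the argument underlying Richardson's original proof, but would have to be developed in the present setting.
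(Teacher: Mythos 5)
Your proof is correct and follows essentially the same route as the paper: both arguments place the involution $x$ inside a finite parabolic subgroup via a classical theorem and then conclude by minimality of $P(x)$. The only difference is the citation — the paper invokes Brink--Howlett's result that any element of finite order lies in a conjugate of a finite standard parabolic subgroup, rather than Richardson's theorem on involutions — and your additional refinement identifying $P(x)$ exactly is not needed for the lemma itself.
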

	
	\begin{proof}
	Let $x\in W$ such that $x^2=1$ and $P=P(x)$. In particular $x$ has finite order. By~\cite[Proposition 1.3]{BH}, there is $I \subseteq S$ and $w\in W$ such that $W_I$ is finite and $x\in w W_I w^{-1}$. Since $w W_I w^{-1}$ is a parabolic subgroup containing $x$, we have $P \subseteq w W_I w^{-1}$. The latter being finite, we deduce that $P$ is finite. 
	\end{proof}
	
The above lemma, together with Proposition~\ref{prop_abs}, will allow us to restrict to finite Coxeter groups when studying $[1,w]_T$ with $w$ an involution. 
	
	\begin{prop}\label{prop:invol}
		Let $(W,S)$ be a finite Coxeter system, and let $u\in W$ be an involution. Let $t\in T$. Then $$t\leq_T u \ \Leftrightarrow \ tu=ut\text{~and~} t\in N(u).$$ 
	\end{prop}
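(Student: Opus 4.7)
The plan is to express both sides of the claimed equivalence in terms of the action of $u$ on the root $\alpha_t$, using the orthogonal decomposition $V = V^u \oplus \mathrm{Mov}(u)$ available because $u$ is an involution. Concretely, since $u^2 = 1$ and $W$ acts orthogonally on $V$, the space $V$ splits as the orthogonal direct sum of the $(+1)$- and $(-1)$-eigenspaces of $u$; the $(+1)$-eigenspace is $V^u$, and the identity $\mathrm{Mov}(u)^\perp = V^u$ recalled in the excerpt forces the $(-1)$-eigenspace to be $\mathrm{Mov}(u)$. In particular, for any vector $v \in V$ we have $u(v) = -v$ if and only if $v \in \mathrm{Mov}(u)$.

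Next I would rewrite the left-hand side using Carter's Lemma (Theorem~\ref{thm_carter}(2)): $t \leq_T u$ is equivalent to $V^u \subseteq H_t$, and taking orthogonal complements this becomes $\alpha_t \in (V^u)^\perp = \mathrm{Mov}(u)$. By the observation above, this is in turn equivalent to the single equation
\[
 u(\alpha_t) = -\alpha_t.
\]

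For the right-hand side, I would note that $tu = ut$ is equivalent to $utu^{-1} = t$, and since $utu^{-1}$ is the reflection associated with the root $u(\alpha_t)$, this translates to $u(\alpha_t) = \pm\alpha_t$. On the other hand, the geometric characterisation of left inversions recalled at the start of Section~2 gives $t \in N(u) \Leftrightarrow u^{-1}(\alpha_t) = u(\alpha_t) \in \Phi^-$. Combining the two, the conjunction $tu = ut$ and $t \in N(u)$ holds exactly when $u(\alpha_t) = -\alpha_t$, i.e.\ the same condition as on the left-hand side, which proves the equivalence.

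There is not really a serious obstacle here: the whole proof collapses to identifying both conditions with $\alpha_t \in \mathrm{Mov}(u)$. The only point that needs a brief justification is the orthogonal decomposition $V = V^u \oplus \mathrm{Mov}(u)$ with $u$ acting as $-\mathrm{Id}$ on $\mathrm{Mov}(u)$, which is a standard consequence of $u$ being an orthogonal involution; everything else is a direct reformulation using Carter's Lemma and the standard dictionary between a reflection, its root, and its conjugates.
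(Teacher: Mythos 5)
Your proof is correct and uses essentially the same ingredients as the paper's: the orthogonal eigenspace decomposition $V=V^u\oplus\mathrm{Mov}(u)$ of the involution, Carter's Lemma, and the root-theoretic descriptions of commutation and of $N(u)$. The only (harmless) difference is in the converse: you close it by reading Theorem~\ref{thm_carter}(2) as a biconditional once both sides have been reduced to $u(\alpha_t)=-\alpha_t$, whereas the paper instead verifies $\ell_T(u)=\ell_T(ut)+1$ directly by computing $V^{ut}$; your route is a legitimate shortcut given how the lemma is stated.
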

	
	\begin{proof}
		Assume that $t\leq_T u$. Then $V^u \subseteq H_t$. We thus have $\mathbb{R} \alpha_t =(H_t)^\perp \subseteq (V^u)^\perp$. But since $u$ is involutive, we have that $(V^u)^\perp$ is precisely the $(-1)$-eigenspace of $u$. We thus have $u(\alpha_t)=-\alpha_t$, hence $t\in N(u)$. Moreover, since $\mathbb{R} \alpha_t$ is stable by $u$, we have that $H_t=(\mathbb{R} \alpha_t)^\perp$ is also stable by $u$. Since $\mathbb{R} \alpha_t$ and $H_t$ are the eigenspaces of $t$, it follows that $tu=ut$.
		
		Conversely, assume that $tu=ut$ and that $t\in N(u)$. Since $t$ and $u$ commute with each other, they are simultaneously diagonalizable. It follows that $u$ preserves $\mathbb{R} \alpha_t$ and $H_t$, which are the eigenspaces of $t$. It follows that $u(\alpha_t)\in\{\pm \alpha_t\}$. Since $t\in N(u)$, this forces $u(\alpha_t)=-\alpha_t$. Writing $H_t= E_1 \oplus E_{-1}$, where $u$ acts by $\mathrm{Id}_V$ on $E_1$ and by $-1$ on $E_{-1}$, we get that $$V= E_1 \oplus E_{-1} \oplus \mathbb{R} \alpha_t,$$ where $u t$ acts by $\mathrm{Id}_V$ on $E_1 \oplus \mathbb{R} \alpha_t$ and by $-\mathrm{Id}_V$ on $E_{-1}$. Now $V^{u}=E_1$ while $V^{u t}=E_1 \oplus \mathbb{R} \alpha_t$, hence $\ell_T(u)=\ell_T(u t)+1$, which concludes the proof.  
	\end{proof}
	
	\begin{cor}\label{cor_wowo}
		Let $(W,S)$ be a finite Coxeter system and $t\in T$. Then $$ t \leq_T w_0 \ \Leftrightarrow \ w_0 t= t w_0.$$  
	\end{cor}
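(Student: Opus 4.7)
The plan is simply to apply Proposition~\ref{prop:invol} with $u = w_0$. Since $(W,S)$ is finite, $w_0$ is an involution (as recalled in the Preliminaries), so the proposition gives
\[
t \leq_T w_0 \iff w_0 t = t w_0 \text{ and } t \in N(w_0).
\]
The second condition is automatic: it was noted in the Preliminaries that $N(w_0) = T$, so every reflection $t \in T$ lies in $N(w_0)$. Thus the only remaining condition is the commutation $w_0 t = t w_0$, which yields the stated equivalence.

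There is essentially no obstacle here; the corollary is really just the specialization of Proposition~\ref{prop:invol} to the particular involution $w_0$, exploiting the fact that for the longest element the inversion-set condition is vacuous.
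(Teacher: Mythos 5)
Your proposal is correct and is exactly the paper's own argument: the corollary is obtained by specializing Proposition~\ref{prop:invol} to $u = w_0$ and noting that $N(w_0) = T$ makes the inversion condition vacuous. Nothing further is needed.
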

	
	\begin{proof}
	If follows immediately from the previous proposition together with the fact that $N(w_0)=T$. 	
		\end{proof}

	\begin{cor}\label{cor_wo}
		Let $(W,S)$ be a finite Coxeter system and $u\in W$. Then $u^2=1$ if and only if $u$ is the longest element in its parabolic closure $P(u)$. In particular, if $P$ is an involutive parabolic subgroup, then there is a unique involution $u\in P$ such that $P=P(u)$. 
	\end{cor}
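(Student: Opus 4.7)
The plan is to prove the forward implication ($u^2=1 \Rightarrow u$ is the longest element of $P(u)$) via the geometric action of $u$. Set $P := P(u)$ and let $w_0^P$ denote the longest element of the finite Coxeter group $P$. By Proposition~\ref{prop_abs}(1), $P = C_W(V^u)$, which together with $u\in P$ gives $V^P = V^u$; hence $\mathrm{Mov}(u)=(V^u)^\perp=(V^P)^\perp$ is preserved by $P$, and the restriction of the $P$-action to this subspace is a realization of the canonical reflection representation of $P$ as a Coxeter group. In particular, all positive roots of $P$ lie in $\mathrm{Mov}(u)$.

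Since $u^2=1$, the subspace $\mathrm{Mov}(u)$ coincides with the $(-1)$-eigenspace of $u$, so $u$ acts as $-\mathrm{Id}$ on $\mathrm{Mov}(u)$ and hence negates every positive root of $P$. The inversion set of $u$ inside $P$ is therefore the entire set $T\cap P$ of reflections of $P$, which forces $\ell_{S_P}(u) = |T\cap P| = \ell_{S_P}(w_0^P)$, where $S_P$ is the Coxeter generating set of $P$. Since $w_0^P$ is the unique element of $P$ of maximal length, $u = w_0^P$.

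The reverse implication is immediate, as the longest element of any finite Coxeter group is an involution. For the final assertion: if $P$ is an involutive parabolic subgroup, then by definition some involution $u$ satisfies $P(u)=P$, and the first part forces $u = w_0^P$, which is uniquely determined by $P$; this yields the uniqueness of the involution $u\in P$ with $P=P(u)$.

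The main obstacle is identifying the $P$-action on $\mathrm{Mov}(u)$ with the canonical reflection representation of $P$, so that the positive roots of $P$ genuinely lie in $\mathrm{Mov}(u)$ and the statement ``$u$ acts as $-\mathrm{Id}$ on $\mathrm{Mov}(u)$'' translates into ``the full inversion set of $u$ inside $P$ equals $T\cap P$''. Once this identification is in place, the remaining step is the standard characterization of $w_0^P$ as the unique element of $P$ whose inversion set is the whole set of reflections of $P$.
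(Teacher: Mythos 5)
Your proof is correct and follows essentially the same route as the paper: the paper deduces that every reflection of $P(u)$ is an inversion of $u$ by combining Proposition~\ref{prop_abs}(3) with Proposition~\ref{prop:invol}, whereas you re-derive that same fact directly (the roots of $P(u)$ lie in $\mathrm{Mov}(u)=(V^u)^\perp$, where the involution $u$ acts as $-\mathrm{Id}$), and both arguments conclude by the characterization of the longest element via its full inversion set. The converse and the uniqueness statement are handled identically in both.
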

	
	\begin{proof}
		Assume that $u$ is an involution. Let $t\in P(u)$. By point $3$ of Proposition~\ref{prop_abs} we have $t\leq_T u$. By Proposition~\ref{prop:invol}, it implies that $t\in N(u)$. It follows that every reflection in $P(u)$ is an inversion of $u$, which happens if and only if $u$ is the longest element of $P(u)$. The converse is immediate as longest elements of finite Coxeter groups are always involutive.  
	\end{proof}

	\begin{prop}\label{prop_invol_bis}
		Let $(W,S)$ be a finite Coxeter system and $u\in W$ be an involution. Let $v\in W$ such that $ v\leq_T u$. Then $v$ is an involution, and $vu=uv$. 
	\end{prop}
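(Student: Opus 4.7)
The plan is strong induction on $\ell_T(u)$, with the base case $\ell_T(u)=0$ (where $u = v = 1$) being trivial. For the inductive step, let $\ell_T(u) = m \geq 1$; we may assume $v \neq u$ and so $k := \ell_T(v) < m$, and I will produce an involution $u'$ of reflection length $m-1$ with $v \leq_T u'$, then invoke the induction hypothesis.

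To construct $u'$, I will extend a $T$-reduced expression $v = t_1\cdots t_k$ to a $T$-reduced expression $u = t_1\cdots t_m$; this is possible by the definition of $\leq_T$. Since $T$ is stable by conjugation, $\leq_T$ also admits a suffix characterization (as noted right after the definition of $\leq_T$ in the preliminaries), yielding $t_m \leq_T u$. Proposition~\ref{prop:invol} then forces $t_m u = u t_m$, so the element $u' := u t_m = t_m u$ satisfies $(u')^2 = u^2 t_m^2 = 1$ and has reduced expression $u' = t_1\cdots t_{m-1}$, which makes $u'$ an involution of reflection length $m-1$ with $v \leq_T u'$. The induction hypothesis applied to $u'$ gives that $v$ is an involution.

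Commutativity of $v$ and $u$ I will prove directly, without recourse to the induction: each reflection $t_i$ in the reduced expression $v = t_1 \cdots t_k$ satisfies $t_i \leq_T v \leq_T u$, hence commutes with $u$ by Proposition~\ref{prop:invol}; sliding $u$ through the product $t_1 \cdots t_k$ then gives $uv = vu$. The only delicate point in the whole argument is recognizing that $t_m \leq_T u$ via the suffix characterization of $\leq_T$, which is the hook that allows Proposition~\ref{prop:invol} to furnish the smaller involution $u'$; after that, the rest is bookkeeping.
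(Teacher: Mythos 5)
Your proof is correct, and it takes a genuinely different route from the paper's. The paper argues geometrically: from $v\leq_T u$ it gets $V^u\subseteq V^v$, deduces that $v$ preserves the two eigenspaces $V^u$ and $\mathrm{Mov}(u)$ of $u$ (giving commutation), and then uses the Brady--Watt decomposition $V=\mathrm{Mov}(v)\oplus\mathrm{Mov}(v^{-1}u)\oplus V^u$ to show that $v$ acts by $-\mathrm{Id}$ on $\mathrm{Mov}(v)$, hence is an involution. You instead reduce everything to the rank-one case already settled in Proposition~\ref{prop:invol}, by induction on $\ell_T(u)$: stripping the last letter $t_m$ off a $T$-reduced expression $u=t_1\cdots t_m$ extending one of $v$, noting $t_m\leq_T u$ via the suffix characterization so that $t_m$ commutes with $u$ by Proposition~\ref{prop:invol}, and passing to the involution $u'=ut_m$ of length $m-1$ with $v\leq_T u'$. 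All the individual steps check out: $\ell_T(ut_m)=m-1$ follows from the triangle inequality, $(ut_m)^2=1$ from the commutation, each letter $t_i$ of a $T$-reduced expression satisfies $t_i\leq_T v$ (conjugate the prefix through $t_i$), and transitivity then lets you conjugate $v$ letter by letter to get $uvu^{-1}=v$. What your approach buys is that it avoids invoking the direct-sum decomposition of moved spaces from Brady--Watt (the paper's citation of \cite[Corollary 1]{BW_o}), replacing it with a purely order-theoretic induction; what it costs is that the paper's eigenspace picture, which is reused almost verbatim in the proof of Proposition~\ref{prop_invol_3}, is not produced along the way. Both arguments ultimately rest on finiteness through the geometric content of Proposition~\ref{prop:invol}.
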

	
	\begin{proof}
		We begin by showing the second statement. Since $v \leq_T u$, we have $V^u \subseteq V^v$. Hence $v$ preserves $V^u$, and thus it also preserves $(V^u)^\perp=\mathrm{Mov}(u)$. Since $V^u$ and $\mathrm{Mov}(u)$ are the eigenspaces of $u$, it follows that $uv=vu$. 
		
		For the first statement, we begin by observing that \begin{equation}\label{eq_dir_s} V= \mathrm{Mov}(u)\oplus V^u= \mathrm{Mov}(v)\oplus \mathrm{Mov}(v^{-1}u) \oplus V^u,\end{equation} where the second equality holds by~\cite[Corollary 1]{BW_o}. Since $u$ is involutive, it acts by $-\mathrm{Id}_V$ on $(V^u)^\perp=\mathrm{Mov}(u)=\mathrm{Mov}(v)\oplus \mathrm{Mov}(v^{-1}u)$. It suffices to show that $v$ acts by $-\mathrm{Id}_V$ on $\mathrm{Mov}(v)$ to conclude that $v$ is involutive. Hence let $x\in \mathrm{Mov}(v)$. We have $$v^{-1}(v(x)+x)= v^{-1}(v(x)-u(x))=-(v^{-1}u(x)-x),$$ but $v^{-1}(v(x)+x)$ lies in $\mathrm{Mov}(v)$ since $v(\mathrm{Mov}(v))=\mathrm{Mov}(v)$, while $-(v^{-1}u(x)-x)\in\mathrm{Mov}(v^{-1}u)$. The decomposition~\eqref{eq_dir_s} thus forces $v^{-1}(v(x)+x)=0$, hence $v(x)=-x$. 
	\end{proof}
	
	In~\cite[Propositions 2.4 and 2.5]{Serre}, it is proven that every involution in a finite Coxeter group admits a $T$-reduced decomposition made of reflections which are pairwise commuting (such decompositions also appeared in the work of Deodhar~\cite{Deodhar_invol} and Springer~\cite{Sp}, without the notion of reflection length). We deduce from the above proposition that this result holds for every $T$-reduced decomposition:
	
	\begin{cor}
	Let $(W,S)$ be a finite Coxeter system and $u\in W$ an involution. Let $t_1 t_2 \cdots t_k$ be a $T$-reduced expression of $u$. Then the reflections $t_1, t_2, \cdots, t_k$ are pairwise commuting. 
	\end{cor}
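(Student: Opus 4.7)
The plan is to induct on $k = \ell_T(u)$, with the cases $k \leq 1$ being immediate. For the inductive step with $k \geq 2$, I would consider both the prefix $v := t_1 t_2 \cdots t_{k-1}$ and the suffix $w := t_2 t_3 \cdots t_k$ of the given expression. Both lie below $u$ in absolute order---the prefix directly from the definition, and the suffix via the equivalence of prefix and suffix characterizations recalled in Section~2 (using the conjugation-invariance of $T$). By Proposition~\ref{prop_invol_bis}, both $v$ and $w$ are then involutions, and the displayed products are $T$-reduced expressions for them. Applying the inductive hypothesis gives that $t_1, \ldots, t_{k-1}$ pairwise commute, and likewise $t_2, \ldots, t_k$ pairwise commute, so the only pair left to handle is $(t_1, t_k)$.

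To show $t_1 t_k = t_k t_1$, I would first note that $v$ commutes with $t_k$: Proposition~\ref{prop_invol_bis} applied to $v \leq_T u$ gives $vu = uv$, and substituting $u = v t_k$ and using $v^2 = 1$ yields $v t_k = t_k v$. Using the commutations between $t_k$ and each $t_i$ for $2 \leq i \leq k-1$ from the inductive hypothesis on $w$ to slide $t_k$ leftwards past $t_{k-1}, t_{k-2}, \ldots, t_2$, one obtains
\[
t_k t_1 t_2 \cdots t_{k-1} \;=\; t_k v \;=\; v t_k \;=\; t_1 t_2 \cdots t_{k-1} t_k \;=\; t_1 t_k t_2 \cdots t_{k-1},
\]
and cancelling $t_2 \cdots t_{k-1}$ on the right yields $t_k t_1 = t_1 t_k$, as required.

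The main obstacle is precisely the pair $(t_1, t_k)$: it is the only pair of indices not contained in any strict contiguous subexpression of length less than $k$, so induction on subexpressions alone cannot reach it. The extra input needed is the commutation of $t_k$ with the whole prefix $v$, supplied by Proposition~\ref{prop_invol_bis} together with the involutivity of $v$; once this is in hand, the commutations already secured by induction for the pair $(t_1, \ldots, t_{k-1})$ and for $(t_2, \ldots, t_k)$ suffice to transfer the "global" commutation $v t_k = t_k v$ into the "local" commutation $t_1 t_k = t_k t_1$.
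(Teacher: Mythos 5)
Your argument is correct, but it takes a genuinely different route from the paper's. The paper treats every pair $(t_i,t_j)$ at once and without induction: conjugating the intervening factors, it rewrites $u = t_i t_j \cdot {}^{t_j t_i}(t_1\cdots t_{i-1})\cdot {}^{t_j}(t_{i+1}\cdots t_{j-1})\cdot (t_{j+1}\cdots t_k)$ as a product of $k$ reflections beginning with $t_i t_j$, so that $t_i t_j \leq_T u$; Proposition~\ref{prop_invol_bis} then makes $t_i t_j$ an involution, i.e.\ $t_i t_j = t_j t_i$, and the proof is two lines. You instead apply Proposition~\ref{prop_invol_bis} only to the prefix $v = t_1\cdots t_{k-1}$ (and, through the inductive hypothesis, to shorter contiguous subwords), and convert the global commutation $v t_k = t_k v$ into the local one $t_1 t_k = t_k t_1$ by sliding $t_k$ past $t_2,\dots,t_{k-1}$ and cancelling. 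Both proofs hinge on the same key lemma (elements below an involution are involutions); yours avoids the Hurwitz-type conjugation trick entirely, at the cost of an induction and the (correctly justified) observations that both the prefix and the suffix of a $T$-reduced expression are $T$-reduced expressions of elements below $u$. The paper's version is shorter and handles arbitrary, possibly non-adjacent pairs directly; yours is a bit longer but uses only prefix/suffix divisibility and elementary cancellation.
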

	
	\begin{proof}
	Let $1 \leq i < j \leq k$. Since $u = t_i t_j {}^{t_j t_i}( t_1 \cdots t_{i-1}) {}^{t_j}(t_{i+1} \cdots t_{j-1}) (t_{j+1} \cdots t_k)$ and the conjugate of a reflection is again a reflection, we have $t_i t_j \leq_T u$. We deduce from the previous proposition that $t_i t_j$ is an involution, hence $t_i t_j = t_j t_i$.  
	\end{proof}
	
  \begin{rmq}
  A classical tool in the study of posets $[1,c]_T$ where $c$ is a (parabolic) Coxeter element is the so-called \textit{Hurwitz action} on $T$-reduced expressions of $c$~\cite[Section 1.6]{Bessis}. It is an action of the braid group on $\ell_T(c)$ strands on such expressions. A property which has many important consequences in the study of dual Artin groups is the transitivity of this action. It can be defined on $T$-reduced decompositions of an arbitrary $w\in W$, but in the case of an involution this tool is quite irrelevant: except in particular cases, it has no chance to be transitive since when you have a $T$-reduced decomposition made of pairwise commuting reflections, the action just modifies the order of the factors. In particular, there are in general several Hurwitz orbits, resulting for example in relations of the form $t_1 t_2 = t_3 t_4$ with $|\{t_1, t_2, t_3, t_4\}|=4$ occurring in the case of involutions, for instance in type $B_2$ with $w=w_0$ and $S=\{s,t\}$, we have $w_0 =t (sts) = s (tst)$. Such relations are not captured by the Hurwitz action. 
  \end{rmq}

	\begin{prop}\label{prop_invol_3}
		Let $(W,S)$ be a finite Coxeter system and let $u\in W$ be an involution. Let $P(u)$ denote the parabolic closure of $u$. Let $v\in W$. The following are equivalent: 
		\begin{enumerate} 
			\item $v \leq_T u$, 
			\item  $v^2=1$ and $v\in P(u),$ 
			\item $v^2=1$ and $V^u \subseteq V^v$. 
		\end{enumerate}	
			If any of the above conditions holds, we have $uv=vu$. 
	\end{prop}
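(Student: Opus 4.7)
The plan is to prove the three equivalences cyclically via $(1) \Rightarrow (2) \Rightarrow (3) \Rightarrow (1)$, and to read off the commutation statement along the way.

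For $(1) \Rightarrow (2)$, Proposition~\ref{prop_invol_bis} immediately yields $v^2 = 1$, and point (2) of Proposition~\ref{prop_abs} gives $P(v) \subseteq P(u)$, whence $v \in P(u)$. For $(2) \Rightarrow (3)$, point (1) of Proposition~\ref{prop_abs} identifies $P(u)$ with $C_W(V^u)$, so every element of $P(u)$, and in particular $v$, acts trivially on $V^u$, which is exactly $V^u \subseteq V^v$.

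The substantive step is $(3) \Rightarrow (1)$. Starting from $v^2 = 1$ and $V^u \subseteq V^v$, the first task is to show $uv = vu$. Dualizing the inclusion gives $\mathrm{Mov}(v) \subseteq \mathrm{Mov}(u)$. Writing any $x \in \mathrm{Mov}(u)$ as $x = a + b$ with $a \in V^v$ and $b \in \mathrm{Mov}(v) \subseteq \mathrm{Mov}(u)$ forces $a = x - b \in \mathrm{Mov}(u)$, so $v$ preserves $\mathrm{Mov}(u)$; it also preserves $V^u$ since $V^u \subseteq V^v$. Hence $v$ preserves both eigenspaces of the involution $u$, and therefore commutes with $u$.

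With $uv = vu$ the two involutions are simultaneously diagonalizable, and one refines the orthogonal decomposition to
$$V = V^u \oplus \bigl(V^v \cap \mathrm{Mov}(u)\bigr) \oplus \mathrm{Mov}(v),$$
on which $uv$ acts by $\mathrm{Id}_V$, $-\mathrm{Id}_V$, $\mathrm{Id}_V$ respectively. Point (1) of Theorem~\ref{thm_carter} then computes $\ell_T(uv) = \dim V^v - \dim V^u$, together with $\ell_T(v) = \dim V - \dim V^v$ and $\ell_T(u) = \dim V - \dim V^u$, so that $\ell_T(v) + \ell_T(v^{-1} u) = \ell_T(v) + \ell_T(uv) = \ell_T(u)$, which is precisely $v \leq_T u$. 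The commutation addendum is now clear: in the direction $(3) \Rightarrow (1)$ it was established directly above, and in the other directions it follows from Proposition~\ref{prop_invol_bis}. The only delicate point I anticipate is the verification that $v$ preserves $\mathrm{Mov}(u)$; once that is in hand, the rest reduces to a routine dimension count via Carter's lemma.
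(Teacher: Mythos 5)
Your proof is correct and follows essentially the same route as the paper: the cycle $(1)\Rightarrow(2)\Rightarrow(3)\Rightarrow(1)$ using Propositions~\ref{prop_invol_bis} and~\ref{prop_abs} for the first two implications, and for $(3)\Rightarrow(1)$ the same decomposition $V = V^u \oplus (V^v \cap \mathrm{Mov}(u)) \oplus \mathrm{Mov}(v)$ with the same dimension count via Carter's lemma. The only cosmetic difference is that you establish $uv=vu$ as a preliminary step before invoking simultaneous diagonalizability, whereas the paper reads off the $\pm 1$ actions of $u$ and $v$ on each summand directly from the inclusions $V^u\subseteq V^v$ and $\mathrm{Mov}(v)\subseteq\mathrm{Mov}(u)$ (so commutation comes out as a byproduct); your extra step is valid but not needed.
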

	
	\begin{proof}
		Suppose that $(1)$ holds. We deduce $(2)$ and also the last statement using Proposition~\ref{prop_invol_bis} and point $2$ of Proposition~\ref{prop_abs}. Assume $(2)$. Since $v\in P(u)=C_W(V^u)$, we have $V^u \subseteq V^v$, which gives $(3)$. Assume $(3)$. We have $V^u \subseteq V^v$, hence $V^v = V^u \oplus (V^v \cap \mathrm{Mov}(u))$ yielding $$V= V^v \oplus \mathrm{Mov}(v)= (V^u \oplus (V^v \cap \mathrm{Mov}(u))) \oplus \mathrm{Mov}(v).$$ Since $u$ and $v$ are involutive and $\mathrm{Mov}(v)= (V^v)^\perp \subseteq (V^u)^\perp = \mathrm{Mov}(u)$, we have that $u$ (respectively $v$) acts by $1$ on $V^u$ (respectively by $1$), by $-1$ on $V^v \cap \mathrm{Mov}(u)$ (respectively by $1$), and by $-1$ on $\mathrm{Mov}(v)$  (respectively by $-1$). We thus have that $v^{-1}u$ is involutive with $V^{v^{-1}u} = V^u \oplus \mathrm{Mov}(v)$ and $\mathrm{Mov}(v^{-1}u)= V^v \cap \mathrm{Mov}(u)$. We deduce that \begin{align*}\ell_T(v)+ \ell_T(v^{-1} u)&=\dim(\mathrm{Mov}(v))+\dim(V^v \cap \mathrm{Mov}(u))= \dim(V) - \dim(V^u) \\&=\dim(\mathrm{Mov}(u))=\ell_T(u),\end{align*} which shows that $v\leq_T u$, yielding $(1)$.     
	\end{proof}

	\begin{prop}\label{prop_poset_isom}
	Let $(W,S)$ be a finite Coxeter system. Let $u\in W$ be an involution. Then $[1,u]_T$ is the set of involutions of $P(u)$. Moreover, denoting by $\mathcal{IP}(u)$ the set of involutive parabolic subgroups of $P(u)$, we have an isomorphism of posets $$\varphi: [1,u]_T \longrightarrow (\mathcal{IP}(u), \subseteq), \ v \mapsto P(v).$$
	\end{prop}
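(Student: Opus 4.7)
The plan is to split the statement into two parts and bootstrap from Proposition~\ref{prop_invol_3} and Corollary~\ref{cor_wo}. First I would establish the set-theoretic equality $[1,u]_T = \{v \in P(u) : v^2=1\}$. The inclusion $\subseteq$ is immediate from $(1) \Rightarrow (2)$ of Proposition~\ref{prop_invol_3}. For the reverse inclusion, let $v \in P(u)$ with $v^2=1$. By Proposition~\ref{prop_abs}(1), $P(u) = C_W(V^u)$, so $v$ fixes $V^u$ pointwise, i.e.\ $V^u \subseteq V^v$. This is condition $(3)$ of Proposition~\ref{prop_invol_3}, giving $v \leq_T u$.

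Next I would verify that $\varphi$ is well-defined: for $v \in [1,u]_T$, the first part (applied to $v$, which is an involution by Proposition~\ref{prop_invol_bis}) shows $P(v)$ is involutive, and Proposition~\ref{prop_abs}(2) gives $P(v) \subseteq P(u)$, so $P(v) \in \mathcal{IP}(u)$. Order-preservation is again Proposition~\ref{prop_abs}(2). For injectivity, if $P(v) = P(v')$ with $v, v'$ involutions, then Corollary~\ref{cor_wo} says both are the longest element of this common parabolic subgroup, hence $v = v'$.

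For surjectivity, take $Q \in \mathcal{IP}(u)$. By Corollary~\ref{cor_wo}, $Q$ contains a unique involution $v$ with $P(v) = Q$. Since $v \in Q \subseteq P(u)$ is an involution in $P(u)$, the first part gives $v \in [1,u]_T$, and $\varphi(v) = Q$. Finally, for order-reflection, suppose $P(v) \subseteq P(v')$ with $v, v' \in [1,u]_T$. Applying the first part of the proposition to the involution $v'$, the set of involutions of $P(v')$ equals $[1,v']_T$; since $v$ is an involution lying in $P(v) \subseteq P(v')$, we obtain $v \leq_T v'$.

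There is no real obstacle here: everything reduces to carefully chaining the already-established propositions, with the only subtle point being the symmetric use of the first part of the statement both for $u$ (to get $[1,u]_T$ in terms of involutions of $P(u)$) and for an arbitrary $v' \in [1,u]_T$ (to get order-reflection of $\varphi$). The uniqueness clause of Corollary~\ref{cor_wo} is what makes injectivity and the definition of the inverse map painless.
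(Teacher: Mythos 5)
Your proof is correct and follows essentially the same route as the paper: both parts reduce to Proposition~\ref{prop_invol_3} (equivalence of $v\leq_T u$, $v$ being an involution in $P(u)$, and $V^u\subseteq V^v$), with bijectivity and order-reflection then being immediate. The only cosmetic difference is that you derive injectivity from the uniqueness clause of Corollary~\ref{cor_wo}, whereas the paper gets it from antisymmetry of $\leq_T$ via mutual containment of parabolic closures; both are valid one-line arguments.
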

	
	\begin{proof}
	The first statement is an immediate consequence of the equivalence between points $1$ and $2$ of Proposition~\ref{prop_invol_3}.
	
	We now prove the second statement. Note that, since $P(u)$ is parabolic, the parabolic subgroups of $P(u)$ are exactly the parabolic subgroups of $W$ included in $P(u)$ (this is easily seen for instance using the description of parabolic subgroups as centralizers of subspaces in the geometric representation). Let $v_1, v_2\in [1,u]_T$, which by the already proven statement are involutions. If $\varphi(v_1) = \varphi(v_2)$ then $v_1 \in P(v_2)$ and $v_2 \in P(v_1)$. Using the equivalence between points $1$ and $2$ of Proposition~\ref{prop_invol_3} we obtain that $v_1 \leq_T v_2$ and $v_2 \leq_T v_1$, hence $v_1 = v_2$. The map is thus injective. It is surjective since, if $P$ is an involutive parabolic subgroup of $P(u)$, then there is an involution $v\in P(u)$ such that $P=P(v)$. By Proposition~\ref{prop_invol_3} again we deduce that $v \leq_T u$, hence $v\in [1,v]_T$. Finally, the map $\varphi$ is an isomorphism of posets since, using again Proposition~\ref{prop_invol_3}, we have $$ v \leq_T u \Leftrightarrow v \in P(u) \Leftrightarrow P(v) \subseteq P(u).$$
	\end{proof}

	\begin{theorem}\label{thm_lattice}
		Let $(W,S)$ be a finite Coxeter system. Let $u\in W$ such that $u^2=1$. Then $[1,u]_T$ is a lattice if and only if the set $\mathcal{IP}(u)$ of involutive parabolic subgroups of $P(u)$ is stable by intersection. In this case in the notation of Proposition~\ref{prop_poset_isom}, for $v, w\in [1,u]_T$ we have $\varphi( v \wedge w) = P(v) \cap P(w)$. 
	\end{theorem}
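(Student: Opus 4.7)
The plan is to reduce everything to the poset isomorphism $\varphi:[1,u]_T \to (\mathcal{IP}(u), \subseteq)$ of Proposition~\ref{prop_poset_isom}, so as to work entirely with involutive parabolic subgroups. Since $P(u)$ is the maximum of $\mathcal{IP}(u)$, this poset is a lattice iff every pair of elements admits a meet; and the question essentially becomes whether those meets can always be realized as intersections.

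The ``if'' direction is immediate: when $\mathcal{IP}(u)$ is stable by intersection, $P_1 \cap P_2$ is tautologically the greatest lower bound of $P_1$ and $P_2$ in $(\mathcal{IP}(u),\subseteq)$, so $\mathcal{IP}(u)$, and hence $[1,u]_T$, is a lattice.

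For the ``only if'' direction together with the meet formula, I would fix $v_1,v_2\in [1,u]_T$ with meet $w := v_1\wedge v_2$ and prove $P(w) = P(v_1)\cap P(v_2)$. The inclusion $P(w)\subseteq P(v_1)\cap P(v_2)$ follows directly from $w\leq_T v_i$ together with point~$2$ of Proposition~\ref{prop_abs}. For the reverse inclusion---the only substantive step---I would use that the parabolic subgroup $P(v_1)\cap P(v_2)$ is a reflection subgroup of $W$, hence generated by its reflections, so it is enough to place each such reflection $t$ inside $P(w)$. Given such $t$, point~$3$ of Proposition~\ref{prop_abs} gives $t\leq_T v_i$ for $i=1,2$, so $t$ is a common lower bound of $v_1$ and $v_2$ in $[1,u]_T$; by definition of the meet this forces $t\leq_T w$, and a second application of point~$3$ of Proposition~\ref{prop_abs} puts $t\in P(w)$. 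Since $w$ is an involution by Proposition~\ref{prop_invol_bis}, the equality $P(w)=P(v_1)\cap P(v_2)$ simultaneously establishes stability of $\mathcal{IP}(u)$ under intersection and the stated formula $\varphi(v_1\wedge v_2) = P(v_1)\cap P(v_2)$.

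The only nontrivial ingredient is this double use of point~$3$ of Proposition~\ref{prop_abs}---the identification of reflections below an involution $v$ with the reflections of $P(v)$---combined with the classical fact that parabolic subgroups are generated by their reflections. Everything else is formal bookkeeping through $\varphi$, so there is no genuine obstacle; the conceptual work has already been done in Propositions~\ref{prop_abs}, \ref{prop_invol_bis} and~\ref{prop_poset_isom}.
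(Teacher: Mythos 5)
Your proof is correct, and the ``if'' direction coincides with the paper's (finite poset with a maximum and pairwise meets is a lattice, with the meet of $P(v)$ and $P(w)$ realized by an involution $z$ with $P(z)=P(v)\cap P(w)$). For the ``only if'' direction, however, you take a genuinely different and arguably cleaner route. The paper argues by contraposition: assuming $P:=P(v)\cap P(w)$ is not involutive, it constructs two distinct common lower bounds $z_1=w_{0,P}$ and $z_2=tz_1t$ of the same reflection length (using Corollary~\ref{cor_wowo} to find a reflection $t\in P$ not commuting with $z_1$), and then shows no element of $[1,u]_T$ can dominate both while remaining a meet. You instead argue directly: given that the meet $w=v_1\wedge v_2$ exists, every reflection $t\in P(v_1)\cap P(v_2)$ satisfies $t\leq_T v_i$ by point~3 of Proposition~\ref{prop_abs}, hence $t\leq_T w$ by definition of the meet, hence $t\in P(w)$; since the intersection of parabolic subgroups is parabolic and therefore generated by its reflections, this gives $P(v_1)\cap P(v_2)\subseteq P(w)$, and the reverse inclusion is point~2 of Proposition~\ref{prop_abs}. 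Combined with the surjectivity of $\varphi$ from Proposition~\ref{prop_poset_isom} and the fact that $w$ is an involution (Proposition~\ref{prop_invol_bis}), this yields stability under intersection and the meet formula in one stroke. What your approach buys is brevity and the meet formula for free; what the paper's approach buys is an explicit witness to the failure of the lattice property (two incomparable maximal lower bounds), which is more informative when one wants to exhibit concretely where and how meets fail, as in the type-by-type analysis of Section~\ref{sec_class}.
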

	
	\begin{proof}
		Assume that the involutive parabolic subgroups of $P(u)$ are stable by intersection. It suffices to show the existence of meets since the poset is finite and has a maximal element $P(u)$ (see~\cite[Proposition 3.3.1]{Stanley}). Hence let $v, w\in [1,u]_T$. Then by Proposition~\ref{prop_invol_3}, we have $v, w\in P(u)$ and $v^2=w^2=1$. In particular, the parabolic subgroups $P(v)$ and $P(w)$ are involutive, hence by assumption the same holds for $P=P(v)\cap P(w)$. There is thus $z\in P$ such that $P=P(z)$ and $z^2=1$. By Proposition~\ref{prop_invol_3}, we have $z \leq_T v, w$. Now let $z'\in W$ such that $z' \leq_T v$, $z' \leq_T w$. In particular we have $z'\in P(w)\cap P(y)= P(z)$. Since $z\in [1,u]_T$, by Proposition~\ref{prop_invol_bis} it is an involution. Using Proposition~\ref{prop_invol_3} we deduce that $z' \leq_T z$.

		Conversely, assume that the involutive parabolic subgroups of $P(u)$ are not stable by intersection. There are thus two involutions $v, w\in P(u)$ such that $P:=P(v) \cap P(w)$ is not involutive. Consider the longest element $z_1=w_{0,P}$ of $P$. Since it is an involution, by Proposition~\ref{prop_invol_3} we have $z_1 \leq_T v$ and $z_1 \leq_T w$. Since $P$ is not involutive, we have $P(z_1) \subsetneq P$. Since both $P(z_1)$ and $P$ are generated by reflections, there is thus $t\in P\cap T$ such that $t\notin P(z_1)$. We hence have $t \not\leq_T z_1$, and by Corollary~\ref{cor_wowo} applied to the Coxeter group $P$, it implies that $t z_1 \neq z_1 t$. Setting $z_2 := t z_1 t$, we have that $z_2$ is a involution lying in $P$ and distinct from $z_1$, with $\ell_T(z_1)=\ell_T(z_2)$. By Proposition~\ref{prop_invol_3} we thus have $z_2 \leq_T v, w$. We can now easily deduce that $v$ and $w$ cannot have a meet for $\leq_T$: assume for contradiction that $Z\in [1,u]_T$ is a meet of $v$ and $w$. We then have that $Z$ is an involution, that $Z\leq_T v$ and $Z\leq_T w$, hence $Z \in P(v)\cap P(w)=P$, and $z_1, z_2 \leq_T Z$. Since $\ell_T(z_1) = \ell_T(z_2)$ and $z_1 \neq z_2$, we have $z_1 <_T Z$. Hence there is $t\in P \cap T$ such that $z_1 \leq_T z_1 t \leq_T Z$. Since $Z$ is an involution, by Proposition~\ref{prop_invol_bis} we get that $z_1 t$ is also an involution. We thus have $z_1 t = t z_1$, hence $t \leq_T z_1$ by Corollary~\ref{cor_wowo}, contradicting $z_1 \leq_T z_1 t$.    
		
		The last statement follows immediately from the isomorphism of posets in Proposition~\ref{prop_poset_isom}, since the meet in the lattice of involutive parabolic subgroups is clearly given by the intersection.    
	\end{proof}
	
	Given an involution $u$ of a finite Coxeter group $W$, we thus obtain that $[1,u]_T$ is a lattice if and only if the involutive parabolic subgroups of $P(u)$ are stable by intersection; if $P(u)$ is not irreducible, then as we shall see in Lemma~\ref{dec_tech_pu} below $P(u)$ will decompose as a direct product $P(u_1) \times P(u_2) \times \cdots \times P(u_k)$, where each $u_i$ is an involution, and $u= u_1 u_2 \cdots u_k$. The lattice will then be given by the direct product of the lattices $[1,u_i]_T$. Classifying involutions $u$ such that $[1,u]_T$ is a lattice thus reduces to classifying finite irreducible and involutive Coxeter groups in which involutive parabolic subgroups are stable by intersection. The first step is thus to determine the finite irreducible and involutive Coxeter groups, which will be easy with the help of the next proposition. 
	
		\begin{prop}\label{prop_invol_p_car}
		Let $(W,S)$ be a finite, irreducible Coxeter system and let $u\in W$ be an involution. The following are equivalent:
		\begin{enumerate}
			\item $W=P(u)$,
			\item $t \leq_T u$ for all $t\in T$, 
			\item $u\in Z(W)$, 
			\item $\ell_T(u)= \mathrm{rank}(W)$.
		\end{enumerate}
		In this case we have $u=w_0$, and $[1,u]_T$ is the set of involutions of $W$. 
	\end{prop}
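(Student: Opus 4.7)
The plan is to prove the four conditions equivalent via the cycle (1) $\Leftrightarrow$ (2), (1) $\Leftrightarrow$ (4), (2) $\Rightarrow$ (3), and (3) $\Rightarrow$ (1), after which $u = w_0$ and the description of $[1,u]_T$ both follow immediately from earlier results.

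The equivalences (1) $\Leftrightarrow$ (2) and (1) $\Leftrightarrow$ (4) are essentially bookkeeping with tools already at hand. For (1) $\Leftrightarrow$ (2), I would invoke Proposition~\ref{prop_abs}(3) to rewrite $t \leq_T u$ as $t \in P(u)$; since $T$ generates $W$, the condition that this holds for every $t \in T$ is equivalent to $P(u) = W$. For (1) $\Leftrightarrow$ (4), Proposition~\ref{prop_abs}(1) identifies $\ell_T(u)$ with $\mathrm{rank}(P(u))$, and Theorem~\ref{thm_carter}(1) together with the description $P(u) = C_W(V^u)$ lets me translate $\ell_T(u) = \mathrm{rank}(W)$ into $V^u = 0$, which in turn gives $P(u) = C_W(0) = W$. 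The implication (2) $\Rightarrow$ (3) is then immediate from Proposition~\ref{prop:invol}: condition (2) forces $tu = ut$ for every $t \in T$, and since $T$ generates $W$ as a group, this gives $u \in Z(W)$.

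The only step I expect to require real input, and hence the main obstacle, is (3) $\Rightarrow$ (1). Here I would exploit the classical fact that the geometric representation $V$ of a finite irreducible Coxeter system is irreducible as a $W$-module --- this is the single point at which the irreducibility hypothesis on $(W,S)$ is used. Granting it, since $u$ is central the subspace $V^u$ is $W$-stable (for $w \in W$ and $v \in V^u$ one has $u(w(v)) = w(u(v)) = w(v)$), so $V^u \in \{0, V\}$; ruling out $V^u = V$ (which would force $u = 1$, incompatible with the other conditions as soon as $W$ is nontrivial), I conclude $V^u = 0$, and hence $P(u) = C_W(0) = W$.

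Once the cycle is closed, both remaining assertions drop out at once: by Corollary~\ref{cor_wo}, the involution $u$ with $P(u) = W$ must be the longest element of its parabolic closure, so $u = w_0$; and Proposition~\ref{prop_poset_isom} identifies $[1,u]_T$ with the set of involutions of $P(u) = W$.
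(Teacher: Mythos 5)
Your proof is correct, and for the one substantive implication it takes a genuinely different route from the paper. The steps $(1)\Leftrightarrow(2)$, $(1)\Leftrightarrow(4)$ and $(2)\Rightarrow(3)$ match the paper (which handles them exactly as you do, via Proposition~\ref{prop_abs} and Proposition~\ref{prop:invol}). The difference is in closing the cycle: the paper proves $(3)\Rightarrow(2)$ directly by a combinatorial argument internal to its own toolkit --- it partitions $T$ into $T_1=\{t\mid t\leq_T u\}$ and $T_2=\{t\mid t\not\leq_T u\}$, uses centrality of $u$ and Proposition~\ref{prop_invol_bis} to show that every reflection of $T_1$ commutes with every reflection of $T_2$, and derives from $T_2\neq\emptyset$ a direct product decomposition $W=\langle T_1\rangle\times\langle T_2\rangle$ contradicting irreducibility. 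You instead prove $(3)\Rightarrow(1)$ by invoking the classical fact that the geometric representation of a finite irreducible Coxeter system is an irreducible $W$-module, so that the $W$-stable subspace $V^u$ must be $0$ or $V$; since $u\neq 1$ (it is an involution, and the representation is faithful) you get $V^u=0$ and $P(u)=C_W(V^u)=W$. Your argument is shorter and arguably more transparent, at the cost of importing an external (though entirely standard) representation-theoretic fact, whereas the paper's argument stays within the machinery it has just developed and uses irreducibility only through the absence of a direct product decomposition. Your handling of the final assertions ($u=w_0$ via Corollary~\ref{cor_wo}, and $[1,u]_T$ equal to the involutions of $W$ via Proposition~\ref{prop_poset_isom}) is the same as the paper's. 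One small polish: when ruling out $V^u=V$, the clean justification is simply that $u$ is an involution, hence $u\neq 1$, and the geometric representation is faithful; there is no need to appeal to ``the other conditions.''
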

	
	\begin{proof}
		$1 \Leftrightarrow 2$. Since $T$ generates $W$, this equivalence is obtained using point $(3)$ of Proposition~\ref{prop_abs}. 
		
		$2 \Leftrightarrow 3$. Assume $2$. Hence $t \leq_T u$ for all $t\in T$. By Proposition~\ref{prop:invol}, we deduce that $tu=ut$ for all $t\in T$, hence that $u\in Z(W)$ since $T$ generates $W$. Conversely, assume that $u \in Z(W)$. Then for all $t\in T$, we have that $ut$ is an involution. Let $T= T_1 \cup T_2$, where $T_1 =\{t\in T \ \vert \ t \leq_T u\}$ and $T_2 =\{t\in T \ \vert \ t \not\leq_T u\}$. Since $u\neq 1$ we have $T_1 \neq \emptyset$. Assume that $T_2 \neq \emptyset$ and let $t_i \in T_i$, $i=1, 2$. Then $u t_1 \leq_T u \leq_T u t_2$, hence $u t_2$ is an involution such that $t_1 t_2 \leq_T u t_2$. By Proposition~\ref{prop_invol_bis} we deduce that $t_1 t_2$ is an involution, hence $t_1 t_2 = t_2 t_1$. The reflections in $T_1$ therefore commute with the reflections in $T_2$. We thus obtain a decomposition as a direct product $W= \langle T_1 \rangle \times \langle T_2 \rangle$, which is impossible since $W$ is irreducible. Hence $T_2 = \emptyset$ and $T_1=T$, yielding $2$.  
		
		The equivalence between $1$ and $4$ follows from the first point of Proposition~\ref{prop_abs}. 
	\end{proof}

	It is well-known that if the center of a finite, irreducible Coxeter group $W$ is nontrivial, then $Z(W)=\{1, w_0\}$ (see for instance~\cite[Chap. V, §4, Exercise 3]{Bou}). We thus get from the previous proposition that the finite, irreducible and involutive Coxeter groups coincide with those with a nontrivial center, or those for which the longest element has maximal reflection length, given by \begin{equation}\label{class_invol} A_1, I_2(2k), k \geq 2, B_n ,  n \geq 3, D_{2k}, k \geq 2, E_7, E_8, F_4, H_3, H_4.\end{equation}

	\begin{rmq}
	In~\cite{Serre}, several results on involutions in finite Coxeter groups are collected and proven; in particular, Serre defines the \defn{degree} of an involution $u$ as the codimension of $V^u$, hence the degree is exactly the reflection length. He calls an involution \defn{maximal} if it has maximal degree. Using different methods, Serre shows that maximal involutions are always conjugate~\cite[Corollaire 3.4]{Serre}, and that for every involution $v\in W$, there is a maximal involution $u\in W$ such that $ v \leq_T u$~\cite[Théorème 3.7]{Serre}. The setting of Proposition~\ref{prop_invol_p_car} corresponds to the case where there is a unique maximal involution, in which case it acts by $-\mathrm{Id}_V$ on $V$: indeed, let $u$ and $W$ be as in Proposition~\ref{prop_invol_p_car} and assume that any of the four equivalent conditions of the proposition holds. Since $\ell_T(u) = \mathrm{rank}(W)= \mathrm{dim}(V)$, we have $V = \mathrm{Mov}(u)$, and since $u$ is involutive, it follows that $u$ acts on $V$ as $-\mathrm{Id}_V$. In this case Serre's result also gives that for every involution $v\in W$, we have $v \leq_T u$.  
	\end{rmq}
	
\section{Classification of involutions yielding a lattice}\label{sec_class}

The aim of this section is to classify the involutions $w$ in a Coxeter group $W$ such that $[1,w]_T$ is a lattice. We begin by the following preliminary lemma, which allows us to reduce to the case where $W$ is finite, irreducible and involutive. 

\begin{lemma}\label{dec_tech_pu}
Let $(W,S)$ be a Coxeter system and $u\in W$ an involution. Then $P(u)$ is finite, and there are involutions $u_1, u_2, \dots, u_k\in W$, commuting to each other, unique up to reordering, such that $u = u_1 u_2 \cdots u_k$, $P(u_i)$ is irreducible for each $i$, and $$P(u) = P(u_1) \times P(u_2) \times \dots \times P(u_k).$$ The poset $[1,u]_T$ is then the direct product of the posets $[1, u_i]_T$. 
\end{lemma}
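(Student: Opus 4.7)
The plan is to extract the decomposition from the structure of $P(u)$ as a Coxeter group. Finiteness of $P(u)$ is given by Lemma~\ref{p_finite}. By Corollary~\ref{cor_wo}, $u = w_0^{P(u)}$, the longest element of $P(u)$. Decomposing the finite Coxeter group $P(u)$ into its irreducible components $P(u) = W_1 \times \cdots \times W_k$, the longest element splits as a product of the longest elements, and I set $u_i := w_0^{W_i}$; each $u_i$ is then an involution, the $u_i$'s commute pairwise (they lie in commuting direct factors), $u = u_1 \cdots u_k$, and each $W_i$ is itself parabolic in $W$ because the irreducible components of a parabolic subgroup (corresponding to the connected components of its Coxeter diagram) are parabolic.

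The crucial step is to show $P(u_i) = W_i$, which also gives irreducibility of $P(u_i)$. The inclusion $P(u_i) \subseteq W_i$ is clear. For the reverse, the product $P(u_1) \times \cdots \times P(u_k)$ is itself parabolic in $W$ (after conjugating to assume $P(u)$ is standard, each $P(u_i)$ takes the form $w_i W_{J_i} w_i^{-1}$ for some $w_i \in W_i$, and the commuting of the $w_i$'s produces the parabolic form $w W_{J_1 \cup \cdots \cup J_k} w^{-1}$ with $w = w_1 \cdots w_k$), and it contains $u$; minimality of $P(u)$ forces equality, hence $P(u_i) = W_i$. For uniqueness, a second decomposition $u = v_1 \cdots v_m$ satisfying the hypotheses yields a parabolic product $P(v_1) \times \cdots \times P(v_m)$ containing $u$, hence equal to $P(u)$; unicity of the decomposition of a finite Coxeter group into irreducible factors gives $m = k$ and, after reordering, $P(v_j) = W_j$, so $v_j$ is an involution of $W_j$ with parabolic closure equal to $W_j$, and Corollary~\ref{cor_wo} forces $v_j = w_0^{W_j} = u_j$.

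For the poset statement, I apply Proposition~\ref{prop_poset_isom} to identify $[1,u]_T$ with the poset of involutive parabolic subgroups of $P(u)$ ordered by inclusion. Any parabolic subgroup $Q$ of the product $W_1 \times \cdots \times W_k$ splits componentwise as $Q_1 \times \cdots \times Q_k$ with $Q_i$ parabolic in $W_i$, and such a $Q$ is involutive if and only if each $Q_i$ is: an involution $v$ with $P(v) = Q$ has componentwise factors $v_i$ that are involutions with $P(v_i) = Q_i$, and conversely the product of involutions realizing each $Q_i$ realizes $Q$. Hence the poset of involutive parabolics of $P(u)$ is the direct product of the posets of involutive parabolics of each $W_i = P(u_i)$, which by Proposition~\ref{prop_poset_isom} are the posets $[1, u_i]_T$. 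The main obstacle I anticipate is the clean verification $P(u_i) = W_i$, which requires combining the product structure of $P(u)$ with the minimality of the parabolic closure.
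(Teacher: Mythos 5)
Your proof is correct and follows essentially the same route as the paper: decompose the finite group $P(u)$ into irreducible (parabolic) components, use minimality of the parabolic closure to force $P(u_i)=W_i$, and get uniqueness from the uniqueness of the irreducible decomposition together with Corollary~\ref{cor_wo}. The only (minor) divergence is in the final poset statement, which you derive from the isomorphism of Proposition~\ref{prop_poset_isom} and the componentwise splitting of involutive parabolic subgroups, whereas the paper deduces it from the additivity $\ell_T(u)=\sum_i\ell_T(u_i)$ obtained via $\ell_T=\mathrm{rank}\circ P$; both arguments are valid.
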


\begin{proof}
The fact that $P(u)$ is finite is given by Lemma~\ref{p_finite}. Since $P(u)$ is a parabolic subgroup, it is a Coxeter group, and we can decompose it into irreducible components $P_1, P_2, \dots, P_k$, which are themselves parabolic, and this decomposition is unique up to the order of the factors. We thus have $P(u) = P_1 \times P_2 \times \cdots P_k$, and $u$ decomposes uniquely according to this decomposition as $u = u_1 u_2 \cdots u_k$, where $u_i \in P_i$ and $u_i$ is involutive for all $i$. It remains to show that $P_i = P(u_i)$. If not, there is $i$ such that $P(u_i) \subsetneq P_i$. But then $u$ lies in $P_1 \times \cdots \times P_{i-1} \times P(u_i) \times P_{i+1} \times \cdots \times P_k$, which is strictly included in $P(u)$, and parabolic, a contradiction. We thus get the existence of the decomposition, and the uniqueness follows from the uniqueness of the decomposition of $P(u)$ into irreducible components and Corollary~\ref{cor_wo}.

The last statement is a consequence of the decomposition $P(u) = P(u_1) \times \cdots \times P(u_k)$ and of the fact that $\ell_T(u) = \sum_{i=1}^k \ell_T(u_i)$; this last statement holds since by the first point of Proposition~\ref{prop_abs} we have $$\ell_T(u) =\mathrm{rank}(P(u)) = \sum_{i=1}^k \mathrm{rank}(P(u_i)) = \sum_{i=1}^k \ell_T(u_i).$$
\end{proof}
	
We can now prove the main result, checking which finite irreducible Coxeter groups among those listed in~\eqref{class_invol} satisfy the condition of Theorem~\ref{thm_lattice}.	
	
\begin{theorem}\label{thm_main_lattice}
The finite, irreducible, involutive Coxeter groups $W$ such that $[1, w_0]_T$ is a lattice are those of type $A_1, I_2(2k), k\geq 2, B_n, n\geq 3, D_4, H_3$.
\end{theorem}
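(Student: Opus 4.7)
The plan is to apply Theorem~\ref{thm_lattice} case by case to each of the finite irreducible involutive Coxeter groups listed in~\eqref{class_invol}. For each such $W$ the task reduces to deciding whether, for every pair $P_1, P_2$ of involutive parabolic subgroups (i.e., parabolic subgroups whose irreducible factors all lie in~\eqref{class_invol}, by Lemma~\ref{dec_tech_pu}), the intersection $P_1 \cap P_2$ is again involutive. I would split the analysis into the positive cases $A_1, I_2(2k), B_n, D_4, H_3$ and the negative cases $D_{2k}$ with $k \geq 3$, together with $E_7, E_8, F_4, H_4$.

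For the positive cases $A_1$ and $I_2(2k)$, every proper parabolic subgroup is either trivial or of rank one, hence involutive, and stability under intersection is immediate. For $D_4$ and $H_3$, a quick enumeration of standard parabolic types shows that an involutive parabolic is either the whole group or of type $A_1^r$; moreover, in such a parabolic the only reflections are the $r$ mutually orthogonal generators (other nontrivial elements being products of at least two commuting orthogonal reflections, which fix a codimension-$\geq 2$ subspace and hence are not reflections of the ambient group). Combined with the fact that $P_1 \cap P_2$ is parabolic and hence generated by the reflections it contains, this shows the intersection of two $A_1^r$-parabolics is of type $A_1^t$, settling $D_4$ and $H_3$. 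For $B_n$, I would parameterize parabolic subgroups through \emph{signed set partitions} of $[n]$: a distinguished ``$B$-block'' $I$ together with further blocks $B_1, \ldots, B_r$, each equipped with a sign function defined up to a global flip. The associated parabolic has type $B_{|I|} \times \prod_k A_{|B_k|-1}$, and is involutive iff $|B_k| \leq 2$ for every $k$. Since $V^{P_1 \cap P_2}$ equals the parabolic closure of $V^{P_1} + V^{P_2}$, a direct computation shows that each block of the signed set partition of $P_1 \cap P_2$ is contained in a common block of the two inputs, so sizes remain $\leq 2$.

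For the negative cases I would exhibit explicit counterexamples. In $D_n$ with $n = 2k$ and $k \geq 3$, take the standard $D_4$-parabolic acting on the last four coordinates of $\mathbb{R}^n$ and its conjugate by the simple transposition $s_{n-4}$: the sum of their fixed spaces is $\operatorname{span}(e_1, \ldots, e_{n-3})$, and the reflections of $D_n$ fixing this subspace generate the rank-three parabolic of type $D_3 = A_3$, which is not involutive. In $F_4$, the standard $B_3$-parabolic and its conjugate by $s_1$ intersect in the $A_2$-type reflection subgroup generated by the roots $e_4, \tfrac{1}{2}(e_1 - e_2 - e_3 + e_4), \tfrac{1}{2}(e_1 - e_2 - e_3 - e_4)$, which pairwise make an angle of $\pi/3$. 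In $H_4$, the standard $H_3$-parabolic and its conjugate by $s_4$ share precisely the reflections of the standard $H_2 = \langle s_1, s_2 \rangle = I_2(5)$, which is not involutive since $5$ is odd. For $E_7$ and $E_8$, analogous constructions with two conjugate $D_6$- (resp.~$D_8$-)parabolics give intersections containing a non-involutive $A_3$ or $D_5$ factor, which can be verified by an explicit root-system computation. I expect the main obstacle to be the combinatorial bookkeeping for the $B_n$ positive case (particularly the interaction between coordinates in $I_1 \setminus I_2$ and the blocks of the second signed set partition), together with the production of transparent counterexamples in $E_7$ and $E_8$, which may require either a careful root-system analysis or a computer algebra check.
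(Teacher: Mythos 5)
Your overall strategy is exactly the paper's: run through the list of finite irreducible involutive types and test, via Theorem~\ref{thm_lattice}, whether involutive parabolic subgroups are stable under intersection, producing explicit conjugate-parabolic counterexamples in the negative cases. Your positive cases ($A_1$, $I_2(2k)$, $D_4$, $H_3$ via the $A_1^r$ observation, and $B_n$ via signed partitions, which is a more explicit version of the paper's one-line ``type $B_k\times(A_1)^\ell$ is closed under intersection'') and your counterexamples for $D_{2k}$ ($k\geq 3$), $F_4$ and $H_4$ match the paper's.

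There is, however, one concrete gap: your argument for $E_8$ uses ``two conjugate $D_8$-parabolics,'' but $D_8$ is not a parabolic subgroup of $E_8$. The only rank-$8$ parabolic subgroup of $E_8$ is $E_8$ itself (parabolic subgroups are pointwise stabilizers of subspaces, equivalently conjugates of standard parabolics $W_I$); the $D_8$ sitting inside $E_8$ is a non-parabolic reflection subgroup, so it cannot serve as an input to Theorem~\ref{thm_lattice}. The fix is the paper's: the $D_4$-based construction used for $D_{2k}$, $k\geq 3$ (a standard $D_4$-parabolic $P_1$ and $P_2=sP_1s$ for a simple reflection $s\notin P_1$ adjacent to exactly one node of $P_1$, giving $P_1\cap P_2$ of type $A_3$) works verbatim inside $E_7$ and $E_8$. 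A smaller point: for $E_7$ you should conclude from the \emph{irreducible factors} of the intersection being non-involutive, not from the intersection merely \emph{containing} a non-involutive parabolic --- an involutive parabolic can of course contain non-involutive ones (e.g.\ $A_3\subseteq D_4$); with the standard $D_6\subseteq E_7$ and its conjugate by the remaining adjacent simple reflection, the intersection is in fact of type $A_5$, which is non-involutive, so your conclusion survives once stated correctly.
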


\begin{proof}
We may go through the list given in~\eqref{class_invol} and check for each family whether involutive parabolic subgroups are stable by intersection or not; applying Theorem~\ref{thm_lattice} then allows one to conclude. 

We first note that the case $B_n, n\geq 3$ and $D_{2k}, k\geq 2$ has already been threated by Kallipoliti in~\cite[Theorems 7 and 8]{Kalli}, who characterized all the elements $w$ in types $B_n$ and $D_n$ such that $[1,w]_T$ is a lattice. We can also reobtain it using the framework developed in this paper by checking that involutive parabolic subgroups are stable by intersection in these cases: in type $B_n$, using the list in~\eqref{class_invol} we see that involutive parabolic subgroups are of type $B_k \times (A_1)^\ell$, and the intersection of subgroups of such type is still of this type; in $D_4$, proper involutive parabolic subgroups are of the form $(A_1)^\ell$ for $\ell \leq 3$, hence also stable by intersection. In type $D_{2k}$ with $k \geq 3$, one can consider a standard parabolic subgroup $P_1$ of type $D_4$ and a simple reflection $s\in D_{2k} \backslash P_1$ commuting with every simple reflection in $P_1$ except one. Then $P_2:= s P_1 s$ is again an involutive parabolic subgroup, and $P_1 \cap P_2$ has type $A_3$, hence is not involutive. For instance, in $D_6$, labeling the Dynkin diagram as in Figure~\ref{fig:d6}, we set $P_1= \langle s_1, s_2, s_3, s_4 \rangle$, $P_2= s_5 P_1 s_5 = \langle s_1, s_2, s_3, s_5 s_4 s_5 \rangle$, and $P_1 \cap P_2= \langle s_1, s_2, s_3 \rangle$: indeed $P_1 \cap P_2$ is parabolic, of rank at most three since $P_1$ and $P_2$ have rank four and are distinct, and $\langle s_1, s_2, s_3 \rangle$ is a rank-three parabolic subgroup included in $P_1 \cap P_2$. Note that the argument also works for types $E_7$ and $E_8$. 

The case $A_1$ is trivial, and in type $I_2(2k)$ with $k\geq 2$, proper involutive parabolic subgroups are of type $A_1$, hence stability by intersection is clear (but in this case, every element $w$ of the group has reflection length at most $2$, hence $[1,w]_T$ is necessarily a lattice). 

The case $H_3$ is already covered by our previously obtained result~\cite[Theorem 2.2]{Gobet} since $\ell_T(w_0)=3$ in this case. Nevertheless, we can reobtain it easily since proper involutive parabolic subgroups in type $H_3$ are of type $(A_1)^\ell$ for $\ell \leq 2$, hence stability by intersection is also clear.   

For types $F_4$ and $H_4$, we can use an argument similar to the one used for $D_{2k}$, $k\geq 3$. Consider a standard parabolic subgroup $P_1$ of type $B_3$ (respectively $H_3$) in $F_4$ (respectively $H_4$). Let $s$ be the unique simple reflection which remains, i.e., not lying in $P_1$. Then $P_2:= s P_1 s$ has the same type as $P_1$, hence is involutive, but $P_1 \cap P_2$ has type $A_2$ (respectively $I_2(5)$), hence involutive parabolic subgroups are not stable by intersection.
\end{proof}

\begin{figure}[h!]
 \begin{center}
 	\begin{tikzpicture}[scale=1.2, thick]
 		
 		\node[circle, draw, fill=white, minimum size=8mm] (s3) at (0, 0) {$s_3$};
 		\node[circle, draw, fill=white, minimum size=8mm] (s4) at (1, 0) {$s_4$};
 		\node[circle, draw, fill=white, minimum size=8mm] (s5) at (2, 0) {$s_5$};
 		\node[circle, draw, fill=white, minimum size=8mm] (s6) at (3, 0) {$s_6$};
 		\node[circle, draw, fill=white, minimum size=8mm] (s1) at (-1, 1) {$s_1$};
 		\node[circle, draw, fill=white, minimum size=8mm] (s2) at (-1, -1) {$s_2$};
 		\draw (s3) -- (s4) -- (s5) -- (s6);
 		\draw (s3) -- (s1);
 		\draw (s3) -- (s2);
 		
 	\end{tikzpicture}
 \end{center}
 \caption{Labeling of the Dynkin diagram of type $D_6$.}
  \label{fig:d6}
 \end{figure}
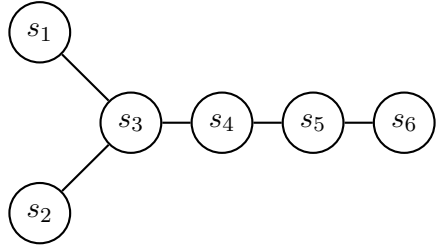
	
Combining Theorem~\ref{thm_main_lattice} with Lemma~\ref{dec_tech_pu}, we get: 	
	
\begin{cor}
Let $(W,S)$ be a Coxeter system and $u\in W$ an involution. Then $[1,u]_T$ is a lattice if and only if every irreducible factor of $P(u)$ is of one of the following types: $A_1, I_2(2k), k\geq 2, B_n, n\geq 3, D_4, H_3$.  
\end{cor}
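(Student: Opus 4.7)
The plan is to reduce to the irreducible involutive case already handled by Theorem~\ref{thm_main_lattice}, using the product decomposition given in Lemma~\ref{dec_tech_pu}. First I would invoke Lemma~\ref{p_finite} to note that $P(u)$ is a finite Coxeter group, so Theorem~\ref{thm_main_lattice} can be applied to each of its irreducible factors. Then I would apply Lemma~\ref{dec_tech_pu} to write $u = u_1 u_2 \cdots u_k$ with pairwise commuting involutions $u_i$ and $P(u) = P(u_1) \times \cdots \times P(u_k)$, each $P(u_i)$ irreducible. The same lemma tells us that $[1,u]_T$ is (isomorphic to) the direct product $\prod_{i=1}^k [1,u_i]_T$ of posets.

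Next I would use the elementary fact that a finite direct product of finite posets is a lattice if and only if each factor is a lattice (meets and joins in a product are taken componentwise, and conversely the existence of a meet in the product forces the existence of meets in each factor by fixing coordinates to the maximal elements). Hence $[1,u]_T$ is a lattice if and only if $[1,u_i]_T$ is a lattice for every $i$.

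To match the hypothesis of Theorem~\ref{thm_main_lattice}, I would then observe that for each $i$, the element $u_i$ is an involution whose parabolic closure is the irreducible finite Coxeter group $P(u_i)$; by Corollary~\ref{cor_wo}, $u_i$ is the longest element of $P(u_i)$. Moreover, $P(u_i)$ is necessarily involutive (it is the parabolic closure of the involution $u_i$). Applying Theorem~\ref{thm_main_lattice} to the finite irreducible involutive Coxeter group $P(u_i)$ then yields that $[1,u_i]_T = [1, w_{0, P(u_i)}]_T$ is a lattice if and only if $P(u_i)$ has one of the types $A_1, I_2(2k)$ with $k\geq 2$, $B_n$ with $n\geq 3$, $D_4$, or $H_3$.

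The main obstacle, if any, is purely bookkeeping: one must be careful that the irreducible factors $P(u_i)$ coming from Lemma~\ref{dec_tech_pu} are exactly the irreducible factors of $P(u)$ as a Coxeter group (this is built into the statement of that lemma), and that each $P(u_i)$ is automatically involutive so that the list~\eqref{class_invol} applies; there is no obstruction here since $u_i$ itself is an involution with $P(u_i) = P(u_i)$. Combining everything gives the stated equivalence.
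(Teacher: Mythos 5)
Your proposal is correct and follows exactly the route the paper intends: the paper derives this corollary by combining Theorem~\ref{thm_main_lattice} with Lemma~\ref{dec_tech_pu}, and your argument simply spells out the details of that combination (the product decomposition of $[1,u]_T$, the fact that a finite product of bounded posets is a lattice iff each factor is, and the identification of each $u_i$ with the longest element of the irreducible involutive group $P(u_i)$ via Corollary~\ref{cor_wo}). No gaps.
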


\end{document}